\documentclass[11pt]{amsart}
\usepackage{amsmath,amssymb,amsthm}
\usepackage{mathrsfs}
\usepackage{graphicx}
\usepackage{enumerate}


\theoremstyle{plain}
\newtheorem*{theorem}{Theorem}
\newtheorem*{corollary}{Corollary}

\newcommand{\id}{\mathrm{id}}

\makeatletter
\def\ps@pprintTitle{%
  \let\@oddhead\@empty
  \let\@evenhead\@empty
  \def\@oddfoot{\reset@font\hfil\thepage\hfil}
  \let\@evenfoot\@oddfoot
}
\makeatother


\setlength{\textwidth}{460pt} \setlength{\textheight}{600pt}
\setlength{\topmargin}{0pt} \setlength{\oddsidemargin}{0pt}
\setlength{\evensidemargin}{0pt} \setlength{\textwidth}{460pt}
\setlength{\textheight}{620pt}
\parskip 0.05cm



\begin{document}

\title{Separating domains from algebraic domains}

\author{Xiaodong Jia, Qingguo Li, Wei luan}
\subjclass{54B10; 06B35; 06F30, 06A06.}
\address{X.\,Jia,  School of Mathematics, Hunan University, Changsha, Hunan, 410082, China. Email:  {\rm jiaxiaodong@hnu.edu.cn} }
\address{Q.\,Li, School of Mathematics, Hunan University, Changsha, Hunan, 410082, China.  Email: {\rm liqingguoli@aliyun.com}}
\address{W.\,Luan (corresponding author), School of Mathematics, Hunan Normal University, Changsha, Hunan, 410082, China.  Email: {\rm luanwei@hunnu.edu.cn}}

\begin{abstract}
We prove that every domain that fails to be algebraic admits the unit interval $[0, 1]$ as its Scott-continuous retract. As a result, every countable domain is algebraic. 
\end{abstract}
\keywords{Domains, algebraic domains, way-below relations.}
\maketitle

Domain theory \cite{gierz03, abramsky94, goubault13a}, initially invented by Dana Scott, is a theory that studies the structure of different semantic domains and is applied, in particular, to denotational semantics to model programming languages \cite{streicher06}. 

Domains, in a nutshell,  are {\bf d}irected-{\bf c}omplete {\bf po}sets (dcpo's, for short) on which the \emph{way-below relation} is \emph{approximating}. Concretely, a dcpo $P$ is a partially ordered set $(P, \leq)$ in which every directed subset has a supremum. The way-below relation (in symbol $\ll$) on $P$ is defined as $x\ll y$ if for every directed subset $D$ of $P$, that $y\leq \sup D$ implies that $x\leq d$ for some $d\in D$. The relation $\ll$ on $P$ is approximating if the set $\{x\in P\mid x\ll p\}$ is a directed set with $p$ as its supremum for all $p\in P$. That is, $P$ is a domain if and only if $\{x\in P \mid x\ll p\}$ is directed and $p = \sup \{x\in P \mid x\ll p\}$ for all $p\in P$. Finite posets are  examples of domains. The unit interval $[0, 1]$ is a domain with the usual ordering, and one could see that $x\ll y$ if and only if $x< y$ or $x = 0$. 
\emph{Algebraic} domains are domains where we put more conditions on the approximating property: a domain $P$ is called algebraic if for all $p \in P$, the subset 
$\{x\in P \mid x\ll x ~\text{and}~ x\ll p  \}$ is directed and $p = \sup \{x\in P \mid x\ll x ~\text{and}~ x\ll p  \} $ for all $p\in P$. Elements $x$ with the property $x\ll x$ are called \emph{compact elements}, and hence algebraicity of domains reads as every element can be approximated by compact elements.  Every finite poset is algebraic as every element in it is compact, and this applies to posets of finite heights. The unit interval $[0, 1]$ serves as an example that is a  domain but not algebraic, as $0$ is the only compact element in it.
Domains and algebraic domains are closely related, in the sense that each domain is actually a \emph{Scott-continuous retract} of an algebraic domain \cite[Theorem I-4.17]{gierz03}. In general, a map $f\colon P\to Q$ between dcpo's $P$ and $Q$ is Scott-continuous if it preserves directed suprema. That is, $f$ is monotone and $f(\sup D) = \sup f(D)$ for all directed subsets $D$ of $P$; and the dcpo $Q$ is called a Scott-continuous retract of $P$ provided that there is a Scott-continuous map $g\colon Q\to P$ with $f\circ g = \id_Q$, and in this case, $f$ is called a \emph{retraction} and $g$ a \emph{section}.

The smaller collection of algebraic domains are rich enough to denote data types in early programming languages. It was the purpose to model probabilistic behaviours in programming that domains are introduced to encompass the domain of (extended) reals or simply the unit interval $[0, 1]$ to talk about probabilities. We have seen that $[0, 1]$ is never algebraic, so it separates domains from algebraic domains. Somewhat to one's surprise, we report in this note that the unit interval $[0, 1]$ is \emph{essentially the unique} example that separates domains from algebraic domains: 

\begin{theorem} \label{main}
The unit interval $[0, 1]$ is a Scott-continuous retract of every domain that fails to be algebraic.
\end{theorem}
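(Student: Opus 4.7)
My plan is to construct an explicit section--retraction pair. I will embed $[0,1]$ in $P$ as a chain $\{a_r\}_{r\in[0,1]}$ with $r<r'\Rightarrow a_r\ll a_{r'}$ and each $a_r$ non-compact, then recover $r$ from $a_r$ via the map $f(x)=\sup\{r\in[0,1]:a_r\le x\}$.

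The first step is to locate a starting point. For each $p\in P$, let $K_p$ denote the set of compact elements below $p$; algebraicity asks that each $K_p$ be directed with supremum $p$. Failure means some $p$ has either $K_p$ non-directed or $\sup K_p<p$. In both situations I can produce $a_0\ll p$ with $a_0\not\le c$ for every $c\in K_p$: if $\sup K_p<p$, then the directed set $\{x:x\ll p\}$ must contain an element not below $\sup K_p$, since its own supremum is $p$; and if $K_p$ fails to be directed, then two compacts $c_1,c_2\in K_p$ lacking a common upper bound in $K_p$ nonetheless admit a common upper bound inside the directed set $\{x:x\ll p\}$, and no such bound can sit below any element of $K_p$.

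This key property is stable under interpolation: if $a\ll p$ with $a\not\le c$ for every $c\in K_p$, then for any $a'$ with $a\ll a'\ll p$, also $a'\not\le c$ for every $c\in K_p$ (else $a\le a'\le c$), and $a\ne a'$ (else $a\ll a$ would place $a$ itself in $K_p$ with $a\le a$). Enumerating the rationals in $(0,1)$ and inserting new elements between their existing neighbours by repeated interpolation, I build a chain $\{a_q\}_{q\in[0,1)\cap\mathbb{Q}}$ with $q<q'\Rightarrow a_q\ll a_{q'}$, each $a_q$ sharing the key property. I then set $a_1=\sup_{q<1,q\in\mathbb{Q}}a_q$ and, for irrational $r\in(0,1)$, $a_r=\sup\{a_q:q\in\mathbb{Q},q<r\}$. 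The implication $r<r'\Rightarrow a_r\ll a_{r'}$ passes to all reals by interposing two rationals, and each $a_r\le p$ remains non-compact because $a_0\le a_r$ would otherwise place a compact element of $K_p$ above $a_0$.

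Now $g\colon r\mapsto a_r$ is Scott-continuous by the left-continuous construction, and strictly increasing because each $a_r$ is non-compact. The identity $f\circ g=\id_{[0,1]}$ follows from strict monotonicity, and $f$ is obviously monotone. Scott-continuity of $f$ is the one genuine calculation: for directed $D$ with $x=\sup D$ and $r<f(x)$, pick $s$ with $r<s$ and $a_s\le x$, then use $a_r\ll a_s\le\sup D$ to obtain $d\in D$ with $a_r\le d$, and conclude $r\le f(d)\le\sup f(D)$. The main obstacle I expect is the first step: non-directedness of $K_p$ is a subtler failure of algebraicity than a deficient supremum, and the whole construction hinges on finding the unified condition ``$a_0\not\le c$ for every $c\in K_p$'' that handles both failure modes; the rest is then a standard interpolation-plus-completion argument, with the way-below definition invoked precisely once to verify continuity of $f$.
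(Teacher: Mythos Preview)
Your approach is essentially the paper's --- build a $\ll$-increasing chain indexed by rationals via repeated interpolation, complete it to a section $g$, and recover indices via a sup to get the retraction $f$ --- and your starting-point analysis (the case split on how $K_p$ fails) is a nice self-contained alternative to the paper's appeal to a standard characterization of algebraicity. Your verifications that $f$ is Scott-continuous and that $f\circ g=\id$ are correct.

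There is, however, a genuine gap in the claim that $g$ is Scott-continuous ``by the left-continuous construction''. You define $a_r$ for irrational $r$ (and for $r=1$) as $\sup\{a_q:q\in\mathbb{Q},\,q<r\}$, which makes $g$ left-continuous at those points; but for rational $q\in(0,1)$ you keep the \emph{interpolated} value of $a_q$, and nothing in the interpolation forces $a_q=\sup\{a_{q'}:q'\in\mathbb{Q},\,q'<q\}$. Concretely, take $P=[0,1]$ itself (with, say, $p=1$ and $a_0>0$): if the first interpolant $a_{1/2}$ happens to be chosen as $0.9$ while the later interpolants $a_{q}$ for rational $q\nearrow 1/2$ accumulate at $0.5$, then $g$ jumps at $1/2$ and fails to be Scott-continuous. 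The fix is exactly what the paper does: define $g(r)=\sup\{a_q:q\in\mathbb{Q},\,q<r\}$ for \emph{every} $r>0$, rationals included, and $g(0)=a_0$. With this corrected $g$ your remaining arguments go through unchanged, since for $r<r'$ one still has $g(r)\le a_q\ll a_{q'}\le g(r')$ for rationals $r<q<q'<r'$, giving $g(r)\ll g(r')$ and hence strict monotonicity via non-compactness of $g(r)$.
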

\begin{proof}
Assume that $P$ is a domain and $P$ is not algebraic. Then there exist $p, q \in P$ such that $p\ll q$ and there are no compact elements between them, in particular, neither $p$ nor $q$ is compact; otherwise, every pair of such $p, q$ can be interpolated by compact elements, and then by \cite[Proposition~I-4.3]{gierz03} $P$ would have been algebraic. 

Now by the Interpolation Property of the way-below relation on~$P$ \cite[Theorem~I-1.9 (ii)]{gierz03}, we could find $a_\frac{1}{2}\in P$ such that $p\ll a_{\frac{1}{2}} \ll q$. Again by the Interpolation Property, we find $a_{\frac{1}{4}}$ and $a_{\frac{3}{4}}$ such that $p\ll a_{\frac{1}{4}}\ll a_{\frac{1}{2}} \ll a_{\frac{3}{4}}\ll q$. Repeat this process and we could get a sequence of elements indexed by dyadic numbers in  $(0, 1)$, and $a_{\frac{l}{2^n}} \ll a_{\frac{k}{2^m}}$ if and only if $\frac{l}{2^n} < \frac{k}{2^m}$. Moreover, none of them is compact as they are between $p$ and~$q$, hence they are all distinct from each other. 

Let $\mathbb D$ be the set of all dyadic numbers in $(0, 1)$. We consider the map $g\colon [0, 1] \to P$:
$$ g(r) = \begin{cases} 
p, & r= 0;\\
\sup \{a_d \mid d \in \mathbb D~\text{and}~d< r \}, & otherwise. 
\end{cases}$$
This map is well-defined since the set $\{a_d \mid d \in \mathbb D~\text{and}~d< r \}$ is directed for $r>0$. Obviously, $g$ is monotone. Moreover, $g$ is Scott-continuous. To see this, we assume $(r_i)_{i\in I}$ is a family of real numbers in $[0,1]$ and $r = \sup_{i\in I} r_i$. As every dyadic number $d$ strictly below $r$ must be strictly below some $r_i$, so we have $a_d\leq g(r_i)$ and this proves that $g(r) \leq \sup_{i\in I} g(r_i)$. That $g(r) \geq \sup_{i\in I} g(r_i)$ holds since $g$ is monotone. So we have proved that $g$ is Scott-continuous. 

We proceed to define a map $f\colon P \to [0, 1]$:
$$ f(x) =  \sup \{d\in \mathbb D \mid a_d \ll x \}. $$
The map $f$ is well-defined since $[0, 1]$ is a complete lattice. It is easy to see that $f$ is monotone. 
For Scott-continuity of $f$, we assume that $(x_i)_{i\in I}$ is a directed family in $P$ and $x = \sup_{i\in I}x_i$. For $a_d \ll x$, by the interpolation property of the way-below relation on~$P$ we could find $y$ with  $a_d \ll y \ll x$. So we know that $a_d\ll y \leq x_i$ for some~$x_i$. This implies $d \leq f(x_i)$ and hence $f(x) \leq \sup_{i\in I} f(x_i)$. Again, that $f(x) \geq \sup_{i\in I} f(x_i)$ holds since $f$ is monotone. So $f$ is indeed Scott-continuous. 

Finally, we prove that $f\circ g = \id_{[0, 1]}$. Obviously $f(g(0)) = f(p) = \sup \emptyset = 0$. Now take $r \in (0, 1]$. By definition $ f(g(r)) =  \sup \{d\in \mathbb D \mid a_d \ll g(r) \}$. If $a_e \ll g(r) = \sup \{a_d \mid d \in \mathbb D~\text{and}~d< r \}$ for some $e\in \mathbb D$,  then $a_e \ll a_d$ for some $d\in \mathbb D$ and $d<r$. So by definition $e< d < r$, and hence $f(g(r))$, as the supremum of all such $e$'s, is below $r$, i.e. $f(g(r)) \leq r$. Conversely, we notice that for each $r\in (0, 1]$ and each dyadic number $e< r$, there exists another dyadic number $d$ with $e< d < r$. So $a_e \ll a_d \leq g(r)$; therefore, $e \in \{d\in \mathbb D \mid a_d \ll g(r)\}$. Hence $f(g(r))$ should be above all such $e$'s and above their supremum $r$.
\end{proof}

For the cardinality issue, $[0, 1]$ cannot be retracts of any countable domains, hence we know the following result, which seems to the authors new to the community.
\begin{corollary}
Countable domains are algebraic.  \hfill $\Box$
\end{corollary}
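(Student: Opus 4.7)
The plan is to derive the corollary from the Theorem by a contrapositive cardinality argument. Assume for contradiction that $P$ is a countable domain that fails to be algebraic. Then by the Theorem, $[0,1]$ is a Scott-continuous retract of $P$, so there exist Scott-continuous maps $f\colon P\to [0,1]$ and $g\colon [0,1]\to P$ with $f\circ g = \id_{[0,1]}$.

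The next step is the cardinality clash. From $f\circ g = \id_{[0,1]}$ it follows immediately that $g$ is injective (equivalently, $f$ is surjective), since $g(r)=g(s)$ forces $r = f(g(r)) = f(g(s)) = s$. Consequently $|P| \geq |[0,1]| = 2^{\aleph_0}$, contradicting the assumption that $P$ is countable. Hence every countable domain must be algebraic. Note that the Scott-continuity of $f$ and $g$ plays no role in this last step: it is purely a set-theoretic retract that produces the injection.

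There is no real obstacle here; all the substance of the corollary has been packaged into the Theorem, and the deduction is a one-line cardinality remark. In fact the same argument gives a stronger statement: any domain of cardinality strictly less than $2^{\aleph_0}$ is algebraic, because $[0,1]$ cannot inject into such a set.
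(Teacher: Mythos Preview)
Your argument is correct and matches the paper's own reasoning exactly: the paper simply remarks that for cardinality reasons $[0,1]$ cannot be a retract of any countable domain, which is precisely the contrapositive cardinality clash you spell out. Your additional observation that the same proof yields algebraicity for any domain of size $<2^{\aleph_0}$ is a valid bonus not stated in the paper.
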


\section*{Acknowledgement}
The first author thanks Rongqi Xiao for asking related questions and Jean Goubault-Larrecq for useful discussions. The authors acknowledge support by NSFC (No.\,12371457, No.\,12231007 and No.\,12301583).

\end{document}